\theoremstyle{plain}
\newtheorem{theorem}{Theorem}
\numberwithin{equation}{section}
\newcommand{\ii}{{\rm i}}
\begin{document}

\title {Elliptic functions from $F(\frac{1}{4}, \frac{3}{4} ; \frac{1}{2} ; \bullet)$ }

\date{}

\author[P.L. Robinson]{P.L. Robinson}

\address{Department of Mathematics \\ University of Florida \\ Gainesville FL 32611  USA }

\email[]{paulr@ufl.edu}

\subjclass{} \keywords{}

\begin{abstract}
We reconsider the elliptic functions that are generated from the hypergeometric function $F(\tfrac{1}{4}, \tfrac{3}{4}; \tfrac{1}{2} ; \bullet)$ by Li-Chien Shen, presenting fresh proofs that do not require the use of theta functions. 
\end{abstract}

\maketitle

\medbreak

\section*{Introduction}

\medbreak 

Li-Chien Shen has made interesting contributions to the Ramanujan theory of elliptic functions to alternative bases. Among these contributions are [1] and [2]: in [1] he developed a family of elliptic functions from the hypergeometric function $F(1/3, 2/3; 1/2 ; \bullet)$; in [2]  he developed a family of elliptic functions from the hypergeometric function $F(1/4, 3/4; 1/2 ; \bullet)$. In both papers, the analysis includes an expression for each elliptic function in terms of its coperiodic Weierstrass $\wp$-function, along with much other information; in both papers, the derivation depends essentially on the theory of the corresponding theta functions, about which also much information is presented. Our purpose here is to address the material of [2] as it relates to the elliptic functions and their corresponding $\wp$-functions directly, without the theta functions as intermediaries; it is a sequel to [0], which performed a like service for [1]. In an appendix, we trace the origins of the elliptic functions in [1] and [2] from the classical theory. For further detail, though with slightly different notation, we refer to the original papers [1] and [2]. 

\medbreak 

\section*{The elliptic functions} 

\medbreak 

Let $0 < \kappa < 1$ and $\lambda:= \sqrt{1 - \kappa^2}$. Write 
$$u = \int_0^{\phi} F(\tfrac{1}{4}, \tfrac{3}{4}; \tfrac{1}{2} ; \kappa^2 \sin^2 \theta) \, {\rm d} \theta = \int_0^{\sin \phi} F(\tfrac{1}{4}, \tfrac{3}{4}; \tfrac{1}{2} ; \kappa^2 t^2) \frac{{\rm d} t}{\sqrt{1 - t^2}}.$$
Near the origin, the relation $\phi \mapsto u(\phi)$ inverts to $u \mapsto \phi(u)$ with $0$ as a fixed point; moreover, there exists $\psi$ fixing $0$ and satisfying 
$$\sin \psi = \kappa \sin \phi.$$ 
Now, we define functions $s, c$ and $d$ by 
$$s = \sin \phi$$ 
$$c = \cos \phi$$
and  
$$d = \cos \psi.$$ 
Plainly, these functions satisfy the `Jacobian' identities 
$$s^2 + c^2 = 1 \; \; \; {\rm and} \; \; \; \kappa^2 s^2 + d^2 = 1.$$ 

\medbreak 

Our immediate aim is to prove that $d$ is an elliptic function and to identify it as a function of its coperiodic Weierstrass function. Of course, here we more strictly mean to say that $d$ extends to an elliptic function; similarly for other functions that arise later. 

\medbreak 

\begin{theorem} \label{phipsi}
The functions $\phi$ and $\psi$ have derivatives given by 
$$\phi' = \frac{\cos \psi}{\cos \frac{1}{2} \psi} \; \; \; {\rm and} \;\;\; \psi' = \kappa \, \frac{\cos \phi}{\cos \frac{1}{2} \psi}.$$
\end{theorem}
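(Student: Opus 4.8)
The plan is to read $\phi'$ straight off the defining integral and then get $\psi'$ by differentiating the constraint $\sin\psi = \kappa\sin\phi$; here and throughout a prime denotes $d/du$. First I would differentiate $u = \int_0^{\phi} F(\tfrac14,\tfrac34;\tfrac12;\kappa^2\sin^2\theta)\,{\rm d}\theta$ with respect to $\phi$, which gives $du/d\phi = F(\tfrac14,\tfrac34;\tfrac12;\kappa^2\sin^2\phi)$; since $u\mapsto\phi$ is the local inverse near the origin, this yields
$$\phi' = \frac{1}{F(\tfrac14,\tfrac34;\tfrac12;\kappa^2\sin^2\phi)}.$$
Using $\sin\psi = \kappa\sin\phi$ to replace the argument $\kappa^2\sin^2\phi$ by $\sin^2\psi$, the whole problem reduces to evaluating $F(\tfrac14,\tfrac34;\tfrac12;\sin^2\psi)$ in closed form.

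The crux is the classical evaluation
$$F\big(a,\, 1-a;\, \tfrac12;\, \sin^2 z\big) = \frac{\cos((2a-1)z)}{\cos z},$$
which at $a = \tfrac14$ (so $2a-1 = -\tfrac12$, and the cosine is even) specializes to $F(\tfrac14,\tfrac34;\tfrac12;\sin^2\psi) = \cos(\tfrac12\psi)/\cos\psi$. I would establish this identity by the standard ODE method: set $x = \sin^2 z$ and verify that $w(z) := \cos z\,F(a,1-a;\tfrac12;x)$ satisfies the constant-coefficient equation $w'' + (2a-1)^2 w = 0$, obtained by pushing the hypergeometric differential equation for the parameters $(a, 1-a; \tfrac12)$ through the substitution $x = \sin^2 z$; matching the initial data $w(0) = 1$, $w'(0) = 0$ then forces $w(z) = \cos((2a-1)z)$. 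Substituting back gives
$$\phi' = \frac{1}{F(\tfrac14,\tfrac34;\tfrac12;\sin^2\psi)} = \frac{\cos\psi}{\cos\tfrac12\psi},$$
the first asserted formula.

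For $\psi'$, I would differentiate the relation $\sin\psi = \kappa\sin\phi$ in $u$, obtaining $\cos\psi\,\psi' = \kappa\cos\phi\,\phi'$, so that $\psi' = \kappa\,(\cos\phi/\cos\psi)\,\phi'$. Inserting the formula for $\phi'$ just found, the factor $\cos\psi$ cancels and leaves
$$\psi' = \kappa\,\frac{\cos\phi}{\cos\tfrac12\psi},$$
which is the second formula.

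The main obstacle is the hypergeometric evaluation; everything else is a one-line differentiation of the integral and of the constraint. The reduction of the hypergeometric equation under $x = \sin^2 z$ to a constant-coefficient trigonometric equation is the single computation that must be carried out with care, and it is precisely the ingredient that allows the derivation to bypass theta functions.
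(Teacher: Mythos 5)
Your proposal is correct and follows essentially the same route as the paper: differentiate the defining integral to get $u' = F(\tfrac14,\tfrac34;\tfrac12;\sin^2\psi)$, invert, apply the evaluation $F(\tfrac14,\tfrac34;\tfrac12;\sin^2\psi) = \cos\tfrac12\psi/\cos\psi$ (an instance of $F(a,1-a;\tfrac12;\sin^2 z) = \cos((2a-1)z)/\cos z$, which the paper simply cites as ``the hypergeometric identity''), and then differentiate $\sin\psi = \kappa\sin\phi$ to obtain $\psi'$. The only difference is that you also sketch a proof of the hypergeometric evaluation itself, which the paper takes as known.
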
 

\begin{proof} 
From the definition of $u$ it follows that $u' = F(\tfrac{1}{4}, \tfrac{3}{4}; \tfrac{1}{2} ; \kappa^2 \sin^2 \phi) = F(\tfrac{1}{4}, \tfrac{3}{4}; \tfrac{1}{2} ; \sin^2 \psi)$
whence by inversion and the hypergeometric identity 
$$F(\tfrac{1}{4}, \tfrac{3}{4}; \tfrac{1}{2} ; \sin^2 \psi) = \frac{\cos \frac{1}{2} \psi}{\cos \psi}$$
we deduce the first formula:  
$$\phi' = \frac{\cos \psi}{\cos \frac{1}{2} \psi}.$$
From the definition of $\psi$ it follows by differentiation that 
$$(\cos \psi) \, \psi' = \kappa (\cos \phi) \, \phi'$$
whence substitution of the first formula yields the second: 
$$\psi' = \kappa \, \frac{\cos \phi}{\cos \frac{1}{2} \psi}.$$
\end{proof} 

\medbreak 

The derivative of the function $d$ may now be presented in similar terms.

\medbreak 

\begin{theorem} \label{d'}
The derivative of $d$ is given by 
$$d\,' = - 2 \kappa \sin \tfrac{1}{2} \psi \cos \phi.$$
\end{theorem}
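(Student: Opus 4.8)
The plan is to differentiate $d=\cos\psi$ directly and then feed in the formula for $\psi'$ already established in Theorem \ref{phipsi}. First I would write
$$d\,' = -(\sin\psi)\,\psi',$$
which is immediate from the chain rule. The work is then entirely a matter of substituting the known derivative of $\psi$ and simplifying.

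Next I would insert $\psi' = \kappa\,\dfrac{\cos\phi}{\cos\frac{1}{2}\psi}$ from Theorem \ref{phipsi}, giving
$$d\,' = -\kappa\,\frac{\sin\psi\,\cos\phi}{\cos\frac{1}{2}\psi}.$$
The one genuine manipulation is to recognize the double-angle factorization $\sin\psi = 2\sin\frac{1}{2}\psi\cos\frac{1}{2}\psi$, which is exactly what is needed to cancel the $\cos\frac{1}{2}\psi$ in the denominator. After that cancellation the $\cos\frac{1}{2}\psi$ disappears and a factor of $2$ appears, leaving
$$d\,' = -2\kappa\,\sin\tfrac{1}{2}\psi\,\cos\phi,$$
which is the claimed formula.

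There is no real obstacle here: the statement is a one-line consequence of Theorem \ref{phipsi} together with a half-angle identity, and I expect the entire argument to be two or three lines. The only point requiring any foresight is to apply the double-angle formula to $\sin\psi$ rather than leaving the derivative in terms of $\sin\psi$ and $\cos\frac{1}{2}\psi$; doing so is what produces the half-angle factor $\sin\frac{1}{2}\psi$ in the intended form, which is presumably the shape that will be convenient in the subsequent development.
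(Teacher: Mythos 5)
Your proof is correct and is essentially identical to the paper's: differentiate $d=\cos\psi$, substitute the formula for $\psi'$ from Theorem \ref{phipsi}, and apply the duplication identity $\sin\psi = 2\sin\tfrac{1}{2}\psi\cos\tfrac{1}{2}\psi$ to cancel the denominator. Nothing further is needed.
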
 

\begin{proof} 
Differentiate using Theorem \ref{phipsi} and trigonometric duplication: thus, 
$$d\,' = (\cos \psi)' = - (\sin \psi) \, \psi' = - (\sin \psi) \, \kappa \, \frac{\cos \phi}{\cos \frac{1}{2} \psi} = - 2 \kappa \sin \tfrac{1}{2} \psi \cos \phi.$$
\end{proof} 

\medbreak 

We are now in a position to derive a first-order differential equation satisfied by $d$. 

\medbreak 

\begin{theorem} \label{dequation}
The function $d$ satisfies the differential equation 
$$(d\,')^2 = 2 \, (1 - d) \, (d^2 - \lambda^2).$$ 
\end{theorem}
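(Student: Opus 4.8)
The plan is to start from the derivative formula of Theorem \ref{d'} and simply square it, then to eliminate every trigonometric quantity in favour of $d$ alone. Squaring $d\,' = - 2 \kappa \sin \tfrac{1}{2} \psi \cos \phi$ gives
$$(d\,')^2 = 4 \kappa^2 \sin^2 \tfrac{1}{2} \psi \, \cos^2 \phi,$$
so the whole task reduces to rewriting the two factors $\sin^2 \tfrac{1}{2} \psi$ and $\cos^2 \phi$ as functions of $d$.

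For the first factor I would use the half-angle identity $\sin^2 \tfrac{1}{2} \psi = \tfrac{1}{2}(1 - \cos \psi) = \tfrac{1}{2}(1 - d)$, which immediately produces the desired factor $1 - d$. For the second factor, the one genuinely problem-specific step, I would pass through the defining relation $\sin \psi = \kappa \sin \phi$, or equivalently the second Jacobian identity $\kappa^2 s^2 + d^2 = 1$. This gives $\kappa^2 \sin^2 \phi = 1 - d^2$, so that
$$\cos^2 \phi = 1 - \sin^2 \phi = \frac{\kappa^2 - (1 - d^2)}{\kappa^2} = \frac{d^2 - \lambda^2}{\kappa^2},$$
where the last equality uses $\lambda^2 = 1 - \kappa^2$.

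Substituting both expressions into the squared formula, the factors of $\kappa^2$ cancel and I obtain $(d\,')^2 = 4 \cdot \tfrac{1}{2}(1 - d) \cdot (d^2 - \lambda^2) = 2 (1 - d)(d^2 - \lambda^2)$ at once. I do not expect any real obstacle here: once the half-angle identity and the two Jacobian relations are in hand, the identity is essentially forced, and the proof is a short algebraic reduction. The only point demanding the slightest care is correctly converting the exponent of $\kappa$ so that it cancels, which the relation $\lambda^2 = 1 - \kappa^2$ arranges automatically.
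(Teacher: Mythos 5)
Your proposal is correct and follows essentially the same route as the paper: square the formula of Theorem \ref{d'}, convert $\sin^2\tfrac{1}{2}\psi$ to $\tfrac{1}{2}(1-d)$ by the half-angle identity, and convert $\kappa^2\cos^2\phi$ to $d^2-\lambda^2$ via $\sin\psi = \kappa\sin\phi$ and $\lambda^2 = 1-\kappa^2$. The algebra checks out and nothing is missing.
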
 

\begin{proof} 
Square the result of Theorem \ref{d'}: on the one hand, 
$$4 \sin^2 \tfrac{1}{2} \psi = 2 (1 - \cos \psi) = 2 (1 - d)$$
by trigonometric duplication; on the other hand, 
$$\kappa^2 \, \cos^2 \phi = \kappa^2 - \kappa^2 \, \sin^2 \phi = \kappa^2 - \sin^2 \psi = \kappa^2 - 1 + \cos^2 \psi = - \lambda^2 + d^2$$
by definition of $\psi$ and $\lambda$. 
\end{proof} 

\medbreak 

Up until this point, our presentation closely follows that of [2]; from this point on, we begin to deviate. 

\medbreak 

The differential equation of Theorem \ref{dequation} enables us to see that $d$ is elliptic; indeed, it enables us to identify $d$ in terms of its coperiodic $\wp$-function. 

\medbreak 

\begin{theorem} \label{wp}
The function $d$ is elliptic, being given by 
$$d = 1 - \frac{\tfrac{1}{2} \kappa^2}{\wp + \tfrac{1}{3}}$$ 
where $\wp = \wp(\bullet; g_2, g_3)$ is the Weierstrass function with invariants 
$$g_2 = \tfrac{4}{3} - \kappa^2 \; \; \; {\rm and} \; \; \; g_3 = \tfrac{8}{27} - \tfrac{1}{3} \kappa^2.$$
\end{theorem}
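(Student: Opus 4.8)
The plan is to convert the first-order differential equation of Theorem~\ref{dequation} into the canonical Weierstrass form by a Möbius substitution. Motivated by the claimed formula, I would introduce the function
$$w := \frac{\tfrac12 \kappa^2}{1 - d} - \tfrac13,$$
meromorphic near the origin (with a pole wherever $d = 1$), so that inverting gives $d = 1 - \tfrac12 \kappa^2/(w + \tfrac13)$. The goal is then to show that $w$ satisfies $(w')^2 = 4 w^3 - g_2 w - g_3$ with precisely the stated invariants; once this is in hand, $d$ will be exhibited as a rational function of $\wp$ and hence elliptic.

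For the computation I would first clear the Jacobian-type factor by setting $p := 1 - d$, which turns Theorem~\ref{dequation} into $(p')^2 = 2p^3 - 4p^2 + 2\kappa^2 p$ after using $\lambda^2 = 1 - \kappa^2$. Writing $q := w + \tfrac13 = \tfrac12\kappa^2/p$ and substituting $p = \tfrac12\kappa^2/q$ together with $p' = -\tfrac12 \kappa^2 q'/q^2$ should collapse the equation to $(q')^2 = 4q^3 - 4q^2 + \kappa^2 q$; expanding in $w = q - \tfrac13$ then produces exactly $(w')^2 = 4w^3 - (\tfrac43 - \kappa^2) w - (\tfrac8{27} - \tfrac13\kappa^2)$, matching $g_2$ and $g_3$. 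I would also record that the cubic $4q^3 - 4q^2 + \kappa^2 q = q(4q^2 - 4q + \kappa^2)$ factors with distinct roots $0$ and $\tfrac12(1 \pm \lambda)$, which are genuinely distinct because $0 < \lambda < 1$; thus the discriminant is nonzero and $\wp(\bullet; g_2, g_3)$ is a bona fide elliptic function.

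It then remains to identify $w$ with $\wp$ itself rather than with some translate. Here I would invoke the standard fact that every solution of the Weierstrass equation has the form $\wp(\bullet - a)$ for a constant $a$. Since $d(0) = 1$ — because $\phi, \psi$ and hence $s = \sin\phi$ all vanish at the origin, giving $d = \cos 0 = 1$ — the function $w$ has a pole at $u = 0$; as the poles of $\wp$ are exactly the lattice points, $a$ must lie in the lattice, whence $w = \wp$. Substituting back yields $d = 1 - \tfrac12\kappa^2/(\wp + \tfrac13)$ near the origin, and since the right-hand side is a rational function of $\wp$ it is elliptic and provides the asserted extension of $d$.

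I expect the only genuinely delicate point to be this last identification: the algebraic reduction to Weierstrass form is routine once the substitution is chosen, but one must argue carefully that the meromorphic identity established near the origin propagates globally and that matching the pole at $0$ really forces the translate to be trivial. The explicit distinctness of the three roots computed above is what keeps us in the nondegenerate elliptic regime, so it is worth stating outright.
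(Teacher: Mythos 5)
Your proposal is correct and follows essentially the same route as the paper: the composite substitution $d \mapsto \tfrac12\kappa^2/(1-d) - \tfrac13$ is exactly the paper's chain $r = (1-d)^{-1}$, $q = \tfrac12\kappa^2 r$, $p = q - \tfrac13$, and the final identification of the solution with $\wp$ itself (rather than a translate) via the pole at the origin forced by $d(0)=1$ is the paper's closing step as well. Your explicit check that the cubic $q(4q^2-4q+\kappa^2)$ has the distinct roots $0$ and $\tfrac12(1\pm\lambda)$ is a welcome touch that the paper only supplies afterwards, via the discriminant $g_2^3 - 27g_3^2 = \kappa^4(1-\kappa^2) > 0$.
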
 

\begin{proof} 
 In [2] this is proved by reference to a standard formula for the integral of $f^{-1/2}$ when $f$ is a quartic. Here, we proceed directly from the differential equation. First, the substitution $r = (1 - d)^{-1}$ yields 
$$(r\,')^2 = 2 \kappa^2 r^3 - 4 r^2 + 2 r;$$
next, the substitution $q = \tfrac{1}{2} \kappa^2 r$ leads to 
$$(q\,')^2  = 4 q^3 - 4 q^2 + \kappa^2 q;$$ 
lastly, the substitution $p = q - \tfrac{1}{3}$ produces 
$$(p\,')^2 = 4 p^3 - (\tfrac{4}{3} - \kappa^2) p - (\tfrac{8}{27} - \tfrac{1}{3} \kappa^2).$$
At the origin, $d = 1$ whence $p$ has a pole; so $p$ is $\wp$ with the stated invariants. 
\end{proof} 

\medbreak 

The formula of Theorem \ref{wp} is perhaps better expressed thus: 
$$(d - 1) \, (\wp + \tfrac{1}{3}) = - \tfrac{1}{2} \kappa^2.$$ 

\bigbreak 

Notice that the differential equation of Theorem \ref{dequation} is cubic in $d$; this circumstance suggests an alternative identification of $d$. First, `reverse signs': substitute $r = -d$ to obtain 
$$(r\,')^2 = 2 r^3 + 2 r^2 - 2 \lambda^2 r - 2 \lambda^2.$$
Next, rescale: substitute $q = \tfrac{1}{2} r$ and deduce that 
$$(q\,')^2 = 4 q^3 + 2 q^2 - \lambda^2 q - \tfrac{1}{2} \lambda^2.$$
Lastly, translate: substitute $p = q + \tfrac{1}{6}$ and conclude that 
$$(p\,')^2 = 4 p^3 - (\lambda^2 + \tfrac{1}{3}) p - (\tfrac{1}{3} \lambda^2 - \tfrac{1}{27}).$$
Here, $\lambda^2 + \tfrac{1}{3} = \tfrac{4}{3} - \kappa^2$ and $\tfrac{1}{3} \lambda^2 - \tfrac{1}{27} = \frac{8}{27} - \frac{1}{3} \kappa^2$; thus, the `new' $p$ and the `old' $p$ (in the proof of Theorem \ref{wp}) satisfy the same differential equation. However, the fact that $d = 1$ at the origin gives this `new' $p$ the value $-\tfrac{1}{3}$ there. Accordingly, this $p$ is a translate of $\wp$: in fact, $p(u) = \wp(u + a)$ where $a$ is such that $\wp (a) = -\tfrac{1}{3}$. Unravelling the substitutions, we arrive at the alternative identification 
$$d(u) = \tfrac{1}{3} - 2 \, \wp(u + a).$$
A glance ahead at Theorem \ref{mid} shows that we may take $a$ to be $\ii \, \omega'$: thus  
$$d(u) = \tfrac{1}{3} - 2 \, \wp(u + \ii \, \omega').$$ 

\bigbreak 

The Weierstrass function $\wp$ has real invariants and has discriminant 
$$g_2^3 - 27 g_3^2 = \kappa^4 \, (1 - \kappa^2) > 0$$
whence its period lattice is rectangular; take its fundamental periods to be $2 \, \omega$ and $2 \, \ii \, \omega'$ with $\omega > 0$ and $\omega' > 0$. Its midpoint values are the zeros of the cubic 
$$4 z^3 - g_2 z - g_3 = 4 z^3 - (\tfrac{4}{3} - \kappa^2) z - (\tfrac{8}{27} - \tfrac{1}{3} \kappa^2) = 4 z^3 - (\lambda^2 + \tfrac{1}{3}) z - (\tfrac{1}{3} \lambda^2 - \tfrac{1}{27});$$
these zeros are $- \frac{1}{3}$ and $\frac{1}{6} \pm \frac{1}{2} \lambda$ precisely.

\medbreak 

\begin{theorem} \label{mid}
The midpoint values of $\wp$ are given by $$\wp(\ii \, \omega') = - \tfrac{1}{3}, \; \; \wp(\omega + \ii \, \omega') = \tfrac{1}{6} - \tfrac{1}{2} \lambda \; \; {\rm and} \; \; \wp(\omega)= \tfrac{1}{6} + \tfrac{1}{2} \lambda.$$
\end{theorem}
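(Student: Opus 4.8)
My plan is to exploit the reality structure of $\wp$ together with the ordering of the three roots already displayed. Since $g_2$ and $g_3$ are real, $\wp$ satisfies $\overline{\wp(z)} = \wp(\bar z)$; as $\wp$ is even, this makes $\wp$ real on both the real and the imaginary axes. The positivity of the discriminant $g_2^3 - 27 g_3^2 = \kappa^4(1 - \kappa^2)$ guarantees three distinct real roots, and the three half-period values $\wp(\omega)$, $\wp(\omega + \ii\,\omega')$, $\wp(\ii\,\omega')$ are precisely these roots; so the entire content of the theorem is to decide which root sits at which half-period. I would first record the ordering of the roots: since $0 < \lambda < 1$, one has $\tfrac16 + \tfrac12\lambda > \tfrac16 - \tfrac12\lambda > -\tfrac13$, the last inequality because $\tfrac16 - \tfrac12\lambda + \tfrac13 = \tfrac12(1 - \lambda) > 0$.

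Next I would pin down the two extreme values by a monotonicity argument on the axes. On the real segment $(0, \omega)$ the function $\wp$ is real, with $\wp(u) \to +\infty$ as $u \to 0^+$, and its (real) derivative $\wp'$ vanishes only at $u = \omega$; hence $\wp$ decreases from $+\infty$ to $\wp(\omega)$, its range being the interval $(\wp(\omega), +\infty)$. Feeding this into $(\wp')^2 = 4(\wp - e_1)(\wp - e_2)(\wp - e_3) \geq 0$ forces $\wp(\omega)$ to be the largest root. Symmetrically, on the imaginary segment $u = \ii\,y$ with $0 < y < \omega'$, the principal part gives $\wp(\ii\,y) \sim -1/y^2 \to -\infty$ as $y \to 0^+$, while $\wp'$ is purely imaginary there, so $(\wp')^2 \leq 0$; the range $(-\infty, \wp(\ii\,\omega'))$ together with $(\wp - e_1)(\wp - e_2)(\wp - e_3) \leq 0$ forces $\wp(\ii\,\omega')$ to be the smallest root.

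Matching these against the ordered roots then gives $\wp(\omega) = \tfrac16 + \tfrac12\lambda$ and $\wp(\ii\,\omega') = -\tfrac13$, and the remaining half-period value $\wp(\omega + \ii\,\omega')$ is the middle root $\tfrac16 - \tfrac12\lambda$ by elimination. I expect the monotonicity and sign analysis on the two axes to be the main obstacle: it is the only step that genuinely uses the rectangular (real, positive-discriminant) structure rather than routine algebra, and some care is needed with the sign of $(\wp')^2$ on the imaginary axis, where $\wp'$ is imaginary and the defining cubic is therefore nonpositive.
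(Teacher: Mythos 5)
Your proof is correct and follows essentially the same route as the paper, which simply cites the fact that $\wp$ is real and strictly increasing along the boundary path $0 \to \ii\,\omega' \to \omega + \ii\,\omega' \to \omega \to 0$ and matches the three midpoint values against the ordered zeros of the cubic. The only difference is that you prove the needed monotonicity yourself on the two axes (via the sign of $(\wp')^2$ and the behaviour at the pole) and recover the middle value by elimination, rather than invoking the full traversal as a known fact.
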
 

\begin{proof} 
Supplement the information regarding the zeros of the cubic with the fact that $\wp(u)$ is real and strictly increases as $u$ traverses the rectangular path $0 \to \ii \, \omega' \to \omega + \ii \, \omega' \to \omega \to 0$.
\end{proof} 

\medbreak 

Each of these midpoint values of $\wp$ is double, of course: in particular, the function $\wp + \frac{1}{3}$ has a double zero at $\ii \, \omega'$. 

\medbreak 

Now, just as $\wp$ is an elliptic function of order two, with $2 \, \omega$ and $2 \, \ii \, \omega$ as fundamental periods, so also is $d$. Its poles are double, located at $\ii \, \omega'$ and points congruent modulo the period lattice, as follows. 

\medbreak 

\begin{theorem} \label{dvalues}
The elliptic function $d$ has a double pole at the point $\ii \, \omega'$; moreover, the (double) values of $d$ at $\omega + \ii \, \omega'$ and $\omega$ are $ - \lambda$ and $\lambda$ respectively.
\end{theorem}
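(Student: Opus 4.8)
The plan is to read everything off the explicit identification in Theorem \ref{wp}, namely $d = 1 - \tfrac{1}{2}\kappa^2/(\wp + \tfrac{1}{3})$, which exhibits $d$ as a Möbius transformation of $\wp$. Because such a transformation preserves orders and local multiplicities, the behaviour of $d$ at any point is governed entirely by the behaviour of $\wp$ there, and the requisite values of $\wp$ at the three half-periods are already recorded in Theorem \ref{mid}. In particular $d$, like $\wp$, is even, so at each half-period its value is assumed with multiplicity two; this is the content of the parenthetical word ``double'' throughout.

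First I would treat the pole. Since $\wp + \tfrac{1}{3}$ has a double zero at $\ii\,\omega'$ (as noted immediately after Theorem \ref{mid}, the midpoint value $\wp(\ii\,\omega') = -\tfrac{1}{3}$ being double), the reciprocal in the formula for $d$ forces a double pole of $d$ at $\ii\,\omega'$.

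Next I would compute the two finite values by substitution. At $\omega + \ii\,\omega'$ we have $\wp + \tfrac{1}{3} = \tfrac{1}{6} - \tfrac{1}{2}\lambda + \tfrac{1}{3} = \tfrac{1}{2}(1 - \lambda)$, and at $\omega$ we have $\wp + \tfrac{1}{3} = \tfrac{1}{6} + \tfrac{1}{2}\lambda + \tfrac{1}{3} = \tfrac{1}{2}(1 + \lambda)$. Inserting these into $d = 1 - \tfrac{1}{2}\kappa^2/(\wp + \tfrac{1}{3})$ and using the factorisation $\kappa^2 = 1 - \lambda^2 = (1-\lambda)(1+\lambda)$, the quotients collapse to $1 + \lambda$ and $1 - \lambda$ respectively, yielding $d(\omega + \ii\,\omega') = -\lambda$ and $d(\omega) = \lambda$.

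The computations are routine; the only point requiring care is the claim that these values are double. This is not a separate calculation but a structural fact: $\wp$ assumes each of its three midpoint values with multiplicity two, and the order-two Möbius relation between $d$ and $\wp$ transports this multiplicity verbatim to $d$. As a cross-check I would recover the same three values through the alternative identification $d(u) = \tfrac{1}{3} - 2\,\wp(u + \ii\,\omega')$, whose arguments at $u = \ii\,\omega', \,\omega + \ii\,\omega', \,\omega$ are $2\ii\,\omega', \,\omega + 2\ii\,\omega', \,\omega + \ii\,\omega'$, congruent to $0, \,\omega, \,\omega + \ii\,\omega'$ modulo the period lattice, so that Theorem \ref{mid} returns the pole, $-\lambda$ and $\lambda$ in turn.
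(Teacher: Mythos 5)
Your proposal is correct and follows essentially the same route as the paper: both read the pole and its multiplicity off the double zero of $\wp + \tfrac{1}{3}$ at $\ii\,\omega'$, and obtain the values $-\lambda$ and $\lambda$ by substituting the midpoint values from Theorem \ref{mid} into the formula of Theorem \ref{wp}. The cross-check via $d(u) = \tfrac{1}{3} - 2\,\wp(u + \ii\,\omega')$ is a pleasant extra but not needed.
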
 

\begin{proof} 
The function $d$ has poles precisely where $\wp$ has its midpoint value $- \frac{1}{3}$ (according to Theorem \ref{wp}) hence precisely at $\ii \, \omega'$ and points congruent (according to Theorem \ref{mid}). As noted after Theorem \ref{mid}, the zero of $\wp + \frac{1}{3}$ at $\ii \, \omega'$ is double; by Theorem \ref{wp}, it follows that the pole of $d$ at $\ii \, \omega'$ is double likewise. Similar direct substitution from Theorem \ref{mid} into Theorem \ref{wp} completes the proof. 
\end{proof} 

\medbreak 

Just as the Weierstrass function $\wp$ is `real', so is the elliptic function $d$: that is, $\wp(\overline{u}) = \overline{\wp(u)}$ and so $d(\overline{u}) = \overline{d(u)}$. 

\medbreak 

\begin{theorem} \label{dzero}
The elliptic function $d$ has a simple zero $z^+$ on the interval $(\omega, \omega + \ii \, \omega')$ and a simple zero $z^- = \overline{z^+}$ on the interval $(\omega, \omega - \ii \, \omega')$. 
\end{theorem}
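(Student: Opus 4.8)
The plan is to exploit the reality of $d$ together with the sign change dictated by Theorem \ref{dvalues}, and then to pin down the number and multiplicity of the zeros using the fact that $d$ has order two. Concretely, I would work on the vertical segment $\{\omega + \ii t : 0 \le t \le \omega'\}$ joining the two half-periods $\omega$ and $\omega + \ii\,\omega'$, show that $d$ restricts there to a continuous real-valued function running from $\lambda$ to $-\lambda$, and invoke the intermediate value theorem to produce a zero $z^+$; the conjugate zero $z^-$ and the simplicity of both then follow from symmetry and a counting argument.

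First I would check that $d$ is real on this segment. In the proof of Theorem \ref{mid} it is used that $\wp$ is real along the edge running from $\omega + \ii\,\omega'$ to $\omega$, which is precisely our segment; since Theorem \ref{wp} expresses $d = 1 - (\tfrac{1}{2}\kappa^2)/(\wp + \tfrac{1}{3})$ as a real rational function of $\wp$, it follows that $d$ is real there too. The segment carries no pole of $d$, because the poles of $d$ sit at $\ii\,\omega'$ and its lattice translates $2m\omega + (2n+1)\ii\,\omega'$, none of which has real part $\omega$; hence $d$ is finite and continuous on the whole segment. By Theorem \ref{dvalues} the endpoint values are $d(\omega) = \lambda > 0$ and $d(\omega + \ii\,\omega') = -\lambda < 0$, so the intermediate value theorem furnishes a point $z^+$ in the open segment $(\omega, \omega + \ii\,\omega')$ with $d(z^+) = 0$.

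Next, reality gives $d(\overline{z^+}) = \overline{d(z^+)} = 0$, so $z^- := \overline{z^+}$ is also a zero, and it lies on the reflected segment $(\omega, \omega - \ii\,\omega')$. Writing $z^+ = \omega + \ii\,\tau$ with $\tau \in (0, \omega')$, the two zeros $z^+$ and $z^-$ are distinct (their imaginary parts are $\pm\tau \ne 0$) and incongruent modulo the period lattice, since $z^+ - z^- = 2\,\ii\,\tau$ is not of the form $2m\omega + 2n\,\ii\,\omega'$ when $\tau \in (0, \omega')$. Because $d$ is elliptic of order two, it has exactly two zeros per period parallelogram counted with multiplicity; the distinct incongruent zeros $z^+$ and $z^-$ therefore exhaust them, and in particular each must be simple.

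The step that needs the most care is the reality claim, and more precisely the identification of our segment with an edge on which $\wp$ is already known (from Theorem \ref{mid}) to be real; once that is in hand, the sign change and the order-two bookkeeping do the rest. I would regard the monotonicity of $\wp$, and hence of $d$, along the segment as an optional extra: it yields an independent proof of simplicity through a strictly monotone crossing from $\lambda$ to $-\lambda$, but the counting argument above already settles simplicity without it.
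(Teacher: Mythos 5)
Your proposal is correct and follows essentially the same route as the paper: reality of $d$ on the segment $(\omega,\omega+\ii\,\omega')$ plus the endpoint values $\pm\lambda$ from Theorem \ref{dvalues} and the intermediate value theorem give $z^+$, and conjugation gives $z^-$. For simplicity the paper's proof instead notes that zeros of $d$ are zeros of $\wp+\tfrac{1}{3}-\tfrac{1}{2}\kappa^2$ where $\wp'\neq 0$, but your order-two counting argument is exactly the alternative the paper itself records immediately after its proof, so the difference is immaterial.
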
 

\begin{proof} 
As $d$ is real-valued on the interval $(\omega, \omega + \ii \, \omega')$, Theorem \ref{dvalues} and the intermediate value theorem place a zero of $d$ on this interval. By `reality', the conjugate of this zero is also a zero. The simplicity of these zeros follows from the fact that zeros of $d$ are precisely zeros of $\wp + \tfrac{1}{3} - \tfrac{1}{2} \kappa^2$ and the fact that $(\wp\,')^2 = \tfrac{1}{2} \kappa^4 (\kappa^2 - 1) \neq 0$ at these points. 
\end{proof} 

\medbreak 

The simplicity of the zeros in Theorem \ref{dzero} is otherwise plain: their difference is not a period and the elliptic function $d$ has order two. Of course, it is likewise plain that the points congruent either to $z^+$ or to $z^-$ account for all the zeros of $d$. 

\medbreak 

Associated to the Weierstrass function $\wp$, with the real $2 \, \omega$ and the imaginary $2 \, \ii \, \omega'$ as fundamental periods, there is a corresponding triple ${\rm sn}, \, {\rm cn}, \, {\rm dn}$ of classical Jacobian elliptic functions. Recall that if $e_1 > e_3 > e_2$ are the midpoint values of a (more general) Weierstrass function then the corresponding Jacobian functions have modulus $k$ such that 
$$k^2 = \frac{e_3 - e_2}{e_1 - e_2}$$
and ${\rm sn}$ is related to $\wp$ by the formula
$$\wp(u) = e_2 + \frac{e_1 - e_2}{{\rm sn}^2[(e_1 - e_2)^{\tfrac{1}{2}} \, u]}\, .$$
In the present case, it follows from Theorem \ref{mid} that ${\rm sn}, \, {\rm cn}, \, {\rm dn}$ have modulus $k$ such that  
$$k^2 = \frac{\tfrac{1}{6} - \tfrac{1}{2} \lambda + \tfrac{1}{3}}{\tfrac{1}{6} + \tfrac{1}{2} \lambda + \tfrac{1}{3}} = \frac{1 - \lambda}{1 + \lambda};$$ 
equivalently 
$$\lambda = \frac{1 - k^2}{1 + k^2}$$
and therefore 
$$\kappa^2 = \frac{4 k^2}{(1 + k^2)^2}.$$ 
It also follows from Theorem \ref{mid} that here, ${\rm sn}$ is related to $\wp$ by
$$\wp(u) + \tfrac{1}{3} = \frac{\tfrac{1}{2} (1 + \lambda)}{{\rm sn}^2[(\tfrac{1}{2} (1 + \lambda))^{\tfrac{1}{2}} \, u]}\, .$$

\bigbreak 

We may now express the elliptic function $d$ explicitly in Jacobian terms. 

\medbreak 

\begin{theorem} \label{Jac} 
The elliptic function $d$ has Jacobian form  
$$d(u) = 1 - (1 - \lambda) \, {\rm sn}^2[(\tfrac{1}{2} (1 + \lambda))^{\tfrac{1}{2}} \, u].$$
\end{theorem}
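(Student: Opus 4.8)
The plan is to read off the result by substituting the already-established ${\rm sn}$-expression for $\wp + \tfrac{1}{3}$ directly into the formula of Theorem \ref{wp}. Theorem \ref{wp} gives $d = 1 - \tfrac{1}{2}\kappa^2/(\wp + \tfrac{1}{3})$, and we have recorded just above that
$$\wp(u) + \tfrac{1}{3} = \frac{\tfrac{1}{2}(1+\lambda)}{{\rm sn}^2[(\tfrac{1}{2}(1+\lambda))^{\tfrac{1}{2}}\,u]}.$$
So the proof amounts to dividing one expression by the other and tidying the resulting constant.

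Carrying this out, the two factors of $\tfrac{1}{2}$ cancel and the ${\rm sn}^2$ moves into the numerator, leaving
$$d(u) = 1 - \frac{\kappa^2}{1+\lambda}\,{\rm sn}^2[(\tfrac{1}{2}(1+\lambda))^{\tfrac{1}{2}}\,u].$$
It then remains only to identify the coefficient $\kappa^2/(1+\lambda)$ with $1 - \lambda$, and this is immediate from the defining relation $\lambda = \sqrt{1-\kappa^2}$: squaring gives $\lambda^2 = 1 - \kappa^2$, hence $\kappa^2 = 1 - \lambda^2 = (1-\lambda)(1+\lambda)$, so that $\kappa^2/(1+\lambda) = 1 - \lambda$. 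Substituting this produces the stated Jacobian form.

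I expect no genuine obstacle here: all the substantive work — the differential equation, the passage to $\wp$, and the Jacobian parametrisation fixing both the modulus $k$ and the argument scale $(\tfrac{1}{2}(1+\lambda))^{\tfrac{1}{2}}$ — has already been carried out in the preceding results, so the theorem is a one-line algebraic consequence once the identity $\kappa^2 = (1-\lambda)(1+\lambda)$ is noted. The only point worth verifying is that the argument of ${\rm sn}$ passes through unchanged, which it does, since the substitution affects only the multiplicative coefficient outside the bracket and not the scaling factor inside it.
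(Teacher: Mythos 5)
Your proof is correct and is exactly the paper's argument: the paper's proof simply says to compare the displayed ${\rm sn}$-expression for $\wp + \tfrac{1}{3}$ with the formula of Theorem \ref{wp}, and you have carried out that comparison explicitly, including the identity $\kappa^2 = (1-\lambda)(1+\lambda)$ needed to tidy the coefficient.
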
 

\begin{proof} 
Simply compare the formula displayed prior to the present theorem with the formula displayed in Theorem \ref{wp}. 
\end{proof} 

\medbreak 

Equivalently, since 
$$\tfrac{1}{2} (1 + \lambda) = \frac{1}{1 + k^2} \; \; {\rm and} \; \; 1 - \lambda = \frac{2 k^2}{1 + k^2}$$
we may write 
$$d(u) = 1 - \frac{2 k^2}{1 + k^2} \, {\rm sn}^2[(1 + k^2)^{-\tfrac{1}{2}} \, u].$$

\bigbreak 

The remaining `mock' Jacobian functions $s$ and $c$ are also identifiable in terms of the `true' Jacobian functions ${\rm sn}, \, {\rm cn}, \, {\rm dn}$. 

\medbreak 

\begin{theorem} \label{s}
The function $s$ satisfies 
$$s^2(u) = {\rm sn}^2 [(1 + k^2)^{-\tfrac{1}{2}} \, u] \Big\{k^2 + {\rm dn}^2 [(1 + k^2)^{-\tfrac{1}{2}} \, u] \Big\}.$$
\end{theorem}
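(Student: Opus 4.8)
The plan is to reduce everything to the already-identified function $d$ by means of the second Jacobian identity. Since $\kappa^2 s^2 + d^2 = 1$, I would begin by writing
$$s^2 = \frac{1 - d^2}{\kappa^2} = \frac{(1 - d)(1 + d)}{\kappa^2},$$
so that the problem becomes one of expressing the two factors $1 - d$ and $1 + d$ in Jacobian terms and then clearing the prefactor $\kappa^2$. This is the whole strategy; the factorization $1 - d^2 = (1-d)(1+d)$ is what makes the ${\rm sn}^2\{k^2 + {\rm dn}^2\}$ shape emerge naturally.

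Next I would substitute the $k$-version of $d$ displayed just after Theorem \ref{Jac}, namely $d = 1 - \tfrac{2k^2}{1+k^2}\,{\rm sn}^2[(1+k^2)^{-1/2} u]$. The first factor is then immediate:
$$1 - d = \frac{2k^2}{1+k^2}\,{\rm sn}^2[(1+k^2)^{-1/2} u].$$
For the second factor the key move is to invoke the fundamental Jacobian identity ${\rm dn}^2 = 1 - k^2\,{\rm sn}^2$, which lets me recognize the combination $1 + k^2 - k^2\,{\rm sn}^2 = k^2 + {\rm dn}^2$; concretely,
$$1 + d = 2 - \frac{2k^2}{1+k^2}\,{\rm sn}^2 = \frac{2\big(k^2 + {\rm dn}^2[(1+k^2)^{-1/2} u]\big)}{1+k^2}.$$
This recognition is really the only step requiring a moment's thought; everything else is bookkeeping.

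Finally I would multiply the two factors and cancel $\kappa^2$ using the value $\kappa^2 = \tfrac{4k^2}{(1+k^2)^2}$ recorded before Theorem \ref{Jac}. The product of the two prefactors is exactly $\tfrac{4k^2}{(1+k^2)^2}\,{\rm sn}^2(k^2 + {\rm dn}^2)$, and dividing by $\kappa^2$ removes the rational prefactor entirely, leaving
$$s^2(u) = {\rm sn}^2[(1+k^2)^{-1/2} u]\,\big\{k^2 + {\rm dn}^2[(1+k^2)^{-1/2} u]\big\},$$
as claimed. I do not anticipate any genuine obstacle here: the computation is forced once the factorization and the ${\rm dn}^2$ identity are in hand, and the arithmetic of the prefactors is designed to cancel against $\kappa^2$.
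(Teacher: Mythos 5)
Your proposal is correct and follows essentially the same route as the paper's own proof: factor $\kappa^2 s^2 = (1-d)(1+d)$, substitute the $k$-form of $d$ from Theorem \ref{Jac}, recognize $k^2 + {\rm dn}^2$ via the identity $k^2\,{\rm sn}^2 + {\rm dn}^2 = 1$, and cancel $\kappa^2 = \tfrac{4k^2}{(1+k^2)^2}$. There is nothing to add.
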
 

\begin{proof} 
Recall that $\kappa^2 s^2 = 1 - d^2 = (1 - d) \, (1 + d)$. Here, writing $v = (1 + k^2)^{-\tfrac{1}{2}} \, u$ for brevity, Theorem \ref{Jac} yields 
$$1 - d(u) = \frac{2 k^2}{1 + k^2} \, {\rm sn}^2 (v)$$
and 
$$1 + d(u) = 2 \, \big(1 - \frac{k^2}{1 + k^2}\, {\rm sn}^2 (v)\big) = \frac{2}{1 + k^2} \, \big(k^2 + 1 - k^2 {\rm sn}^2 (v)\big) = \frac{2}{1 + k^2} \, \big(k^2 + {\rm dn}^2 (v)\big)$$
on account of the (true) Jacobian identity $k^2 {\rm sn}^2 + {\rm dn}^2 = 1$. All that remains is to assemble the pieces and cancel the coefficient $\kappa^2 = \frac{4 k^2}{(1 + k^2)^2}.$
\end{proof} 

\medbreak 

It is perhaps worth remarking here that although $s^2$ is thus elliptic, it does not have a meromorphic square-root; in particular, $s$ itself is not an elliptic function. To see this, we inspect the zeros of $s^2$. The identity $\kappa^2 s^2 = (1 - d) \, (1 + d)$ shows that $s = 0$ precisely where $d = \pm 1$. On the one hand, $d$ has value $+ 1$ precisely where $\wp$ has poles; these points are double, both as poles of $\wp$ and as zeros of $d - 1$. On the other hand, $d = -1$ precisely where $\wp = \tfrac{1}{4} \kappa^2 - \tfrac{1}{3}$; there $(\wp\,')^2 = \tfrac{1}{16} k^6 \neq 0$, whence these points are simple. Thus $s^2$ has a simple zero and so lacks meromorphic square-roots. 

\medbreak 

\begin{theorem} \label{c}
The function $c$ satisfies 
$$c^2 (u) = {\rm cn}^2 [(1 + k^2)^{-\tfrac{1}{2}} \, u] \, {\rm dn}^2 [(1 + k^2)^{-\tfrac{1}{2}} \, u].$$ 
\end{theorem}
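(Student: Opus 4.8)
The plan is to obtain $c^2$ from the formula for $s^2$ already established in Theorem \ref{s}, using the mock-Jacobian identity $s^2 + c^2 = 1$ recorded at the outset. First I would write $c^2 = 1 - s^2$ and substitute the expression from Theorem \ref{s}, abbreviating $v = (1 + k^2)^{-1/2} u$ as before; this gives
$$c^2(u) = 1 - {\rm sn}^2(v) \big\{ k^2 + {\rm dn}^2(v) \big\} = 1 - k^2 {\rm sn}^2(v) - {\rm sn}^2(v) \, {\rm dn}^2(v).$$

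Next I would simplify the constant-and-linear part using the genuine Jacobian identity $k^2 {\rm sn}^2 + {\rm dn}^2 = 1$, which recasts $1 - k^2 {\rm sn}^2(v)$ as ${\rm dn}^2(v)$. Factoring ${\rm dn}^2(v)$ out of the two surviving terms then leaves
$$c^2(u) = {\rm dn}^2(v) \, \big( 1 - {\rm sn}^2(v) \big),$$
and a final application of ${\rm sn}^2 + {\rm cn}^2 = 1$ turns the bracket into ${\rm cn}^2(v)$, yielding the claimed $c^2(u) = {\rm cn}^2(v) \, {\rm dn}^2(v)$.

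Since each step is merely the algebraic substitution of one of the standard Jacobian identities, there is no genuine obstacle here; the only point to watch is applying the two identities in the correct order, so that the $k^2 {\rm sn}^2$ term is absorbed into ${\rm dn}^2$ before the common factor is extracted. I would expect this to be the shortest of the identification theorems, and in fact one could equally well run the computation through the factorization $c^2 = (1-s^2)$ in tandem with the remark after Theorem \ref{s} that the zeros of $s^2$ occur where $d = \pm 1$, as a consistency check on the location and multiplicity of the zeros of $c^2$.
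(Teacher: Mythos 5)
Your proposal is correct and is essentially identical to the paper's own proof: both start from $c^2 = 1 - s^2$, substitute Theorem \ref{s}, absorb $1 - k^2\,{\rm sn}^2(v)$ into ${\rm dn}^2(v)$ via $k^2\,{\rm sn}^2 + {\rm dn}^2 = 1$, and finish with ${\rm sn}^2 + {\rm cn}^2 = 1$. No issues.
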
 

\begin{proof} 
Again write $v = (1 + k^2)^{-\tfrac{1}{2}} \, u$ for convenience. Recall that $c^2 = 1 - s^2$ and compute: Theorem \ref{s} yields 
$$c^2 (u) = 1 - k^2 \, {\rm sn}^2(v)  - {\rm sn}^2 (v) \, {\rm dn}^2 (v) = {\rm dn}^2 (v) - {\rm sn}^2 (v) \, {\rm dn}^2 (v) = {\rm cn}^2 (v) \, {\rm dn}^2 (v)$$
on account of the (true) Jacobian identities $k^2 {\rm sn}^2 + {\rm dn}^2 = 1$ and ${\rm sn}^2 + {\rm cn}^2 = 1$. 
\end{proof} 

\medbreak 

Here, we may extract meromorphic square-roots: as the functions $c, \, {\rm cn}$ and ${\rm dn}$ all take the value $1$ at the origin, we deduce that $c$ is elliptic, being given by  
$$c (u) = {\rm cn} \, [(1 + k^2)^{-\tfrac{1}{2}} \, u] \, {\rm dn} \, [(1 + k^2)^{-\tfrac{1}{2}} \, u].$$ 
Less concretely: it may be checked that $\kappa^2 c^2 = d^2 - \lambda^2$; Theorem \ref{dvalues} gives $c^2$ quadruple poles and double zeros (at the lattice midpoints) so $c^2$ has meromorphic square-roots. 

\medbreak 

We have now recovered and elaborated upon almost all of the results in [2] whose statements do not involve theta functions, at the same time offering fresh proofs that are free of theta functions. The few such results that remain may safely be left as exercises along the same lines: for example, the functions 
$$(\phi\,')^2 = 2 \, \frac{d^2}{d + 1}$$
and 
$$(\psi\,')^2 = 2 \, \frac{d^2 - \lambda^2}{d + 1}$$
are plainly elliptic; the task of deciding whether they have meromorphic square-roots is such an exercise. As an exercise beyond these, locate precisely the zeros $z^{\pm}$ of Theorem \ref{dzero}; in this connexion, note that $\wp(z^{\pm}) = \tfrac{1}{2} \kappa^2 - \tfrac{1}{3}$ and $\wp(z^{\pm} + \ii \, \omega') = \tfrac{1}{6}$.  

\medbreak 

\section*{Appendix} 

It will perhaps be helpful to indicate the logical origins of the elliptic functions that are constructed from hypergeometric functions by Shen in [1] and [2]. 

\medbreak 

Classically, the elliptic functions of Jacobi arise by the inversion of elliptic integrals. Thus: when 
$$u = \int_0^{\phi} (1 - \kappa^2 \sin^2 \theta)^{-1/2} {\rm d} \theta$$
and the relation $\phi \mapsto u$ is inverted to $u \mapsto \phi$ near $0$ as a fixed point, $\phi$ is called the amplitude ${\rm am} \, u$ of $u$; the functions ${\rm sn}$ and ${\rm cn}$ are defined by 
$${\rm sn} \, u = \sin {\rm am} \, u = \sin \phi$$ 
$${\rm cn} \, u = \cos {\rm am} \, u = \cos \phi$$ 
while ${\rm dn}$ is defined either by 
$$(1) \; \; \; {\rm dn} \, u = \frac{{\rm d} \phi}{{\rm d} u}$$
or by 
$$(2) \; \; {\rm dn} \, u = (1 - \kappa^2 \sin^2 \phi)^{1/2}$$
as these two alternatives are equivalent in this classical setting. The integral that expresses $u$ in terms of $\phi$ may also be written 
$$u = \int_0^{\sin \phi} (1 - \kappa^2 t^2)^{-1/2} \frac{{\rm d} t}{\sqrt{1 - t^2}}$$
via the substitution $t = \sin \theta$ and by 
$$u = \int_0^{\sin \phi} F(\tfrac{1}{2}, \tfrac{1}{2}; \tfrac{1}{2} ; \kappa^2 t^2)  \frac{{\rm d} t}{\sqrt{1 - t^2}}$$
in view of the hypergeometric identity 
$$F(\tfrac{1}{2}, \tfrac{1}{2}; \tfrac{1}{2} ; z) = (1 - z)^{-1/2}.$$ 

\bigbreak 

In [1] the (classical) hypergeometric function $F(\tfrac{1}{2}, \tfrac{1}{2}; \tfrac{1}{2} ; \bullet)$ is replaced by $F(\tfrac{1}{3}, \tfrac{2}{3}; \tfrac{1}{2} ; \bullet)$: there, inversion of 
$$u = \int_0^{\sin \phi} F(\tfrac{1}{3}, \tfrac{2}{3}; \tfrac{1}{2} ; \kappa^2 t^2)  \frac{{\rm d} t}{\sqrt{1 - t^2}}$$
leads to functions ${\rm sn}_3$ and ${\rm cn}_3$ defined by
$${\rm sn}_3 \, u = \sin \phi$$ 
$${\rm cn}_3 \, u = \cos \phi$$ 
and to ${\rm dn}_3$ defined by
$${\rm dn}_3 \, u = \frac{{\rm d} \phi}{{\rm d} u}$$
in line with the classical choice (1) above. 

\medbreak 

In [2] the (classical) hypergeometric function $F(\tfrac{1}{2}, \tfrac{1}{2}; \tfrac{1}{2} ; \bullet)$ is replaced by $F(\tfrac{1}{4}, \tfrac{3}{4}; \tfrac{1}{2} ; \bullet)$: there, inversion of 
$$u = \int_0^{\sin \phi} F(\tfrac{1}{4}, \tfrac{3}{4}; \tfrac{1}{2} ; \kappa^2 t^2)  \frac{{\rm d} t}{\sqrt{1 - t^2}}$$
leads to functions ${\rm sn}_2$ and ${\rm cn}_2$ defined by
$${\rm sn}_2 \, u = \sin \phi$$ 
$${\rm cn}_2 \, u = \cos \phi$$ 
and to ${\rm dn}_2$ defined by
$${\rm dn}_2 \, u = (1 - \kappa^2 \sin^2 \phi)^{1/2}$$
in line with the classical choice (2) above. 

\bigbreak

\begin{center} 
{\small R}{\footnotesize EFERENCES}
\end{center} 
\medbreak 

[0] P.L. Robinson, {\it Elliptic functions from $F(\frac{1}{3}, \frac{2}{3} ; \frac{1}{2} ; \bullet)$}, arXiv 1907.09938 (2019). 

\medbreak 

[1] Li-Chien Shen, {\it On the theory of elliptic functions based on $_2F_1(\frac{1}{3}, \frac{2}{3} ; \frac{1}{2} ; z)$}, Transactions of the American Mathematical Society {\bf 357}  (2004) 2043-2058. 

\medbreak 

[2] Li-Chien Shen, {\it On a theory of elliptic functions based on the incomplete integral of the hypergeometric function $_2 F_1 (\frac{1}{4}, \frac{3}{4} ; \frac{1}{2} ; z)$}, Ramanujan Journal {\bf 34} (2014) 209-225. 

\medbreak

\end{document}